
\documentclass[12pt]{article}

\setlength{\textwidth}{6.3in}
\setlength{\textheight}{8.7in}
\setlength{\topmargin}{0pt}
\setlength{\headsep}{0pt}
\setlength{\headheight}{0pt}
\setlength{\oddsidemargin}{0pt}
\setlength{\evensidemargin}{0pt}

\makeatletter
\newfont{\footsc}{cmcsc10 at 8truept}
\newfont{\footbf}{cmbx10 at 8truept}
\newfont{\footrm}{cmr10 at 10truept}
\makeatother
\pagestyle{plain}

\usepackage{amsmath}
\usepackage{amssymb}
\usepackage{exscale}
\usepackage{amsthm}
\usepackage{epsfig}
\usepackage{verbatim}
\usepackage{setspace}
\usepackage{bbm}

\theoremstyle{plain}
\newtheorem{theorem}{Theorem}[section]
\newtheorem{proposition}[theorem]{Proposition}
\newtheorem{lemma}[theorem]{Lemma}
\newtheorem{corollary}[theorem]{Corollary}

\theoremstyle{definition}

\newtheorem{remark}[theorem]{Remark}

\DeclareMathOperator{\dist}{dist}

\DeclareMathOperator{\Prob}{{\bf P}}

\DeclareMathOperator{\modulo}{mod}
\DeclareMathOperator{\Var}{Var}

\newcommand{\R}{{\mathbb{R}}}

\def\bs0{\bf 0}


\title{Note on Pairwise Negative Dependence of Randomized
Rank-1 Lattices}

\author{Marcin Wnuk\thanks{Mathematisches Seminar, Christian-Albrechts-Universit\"at zu Kiel,
Germany ({\tt wnuk@math.uni-kiel.de}).}
\and Michael Gnewuch\thanks{Institut f\"ur Mathematik, Universit\"at Osnabr\"uck,
Germany ({\tt michael.gnewuch@uni-osnabrueck.de}).}
}

\begin{document}

\maketitle
\vskip 1pc


\begin{abstract}
In her recent paper [Negative dependence, scrambled nets, and variance bounds. Math. Oper.~Res. 43 (2018), 228-251]  Christiane Lemieux studied a framework to analyze the dependence structure of sampling schemes. The main goal of the framework is to determine conditions under which the negative dependence structure of a sampling scheme yields estimators with reduced variance compared to Monte Carlo estimators. For instance, she was able to show that in dimension $d=2$ scrambled $(0,m,d)$-nets lead to randomized quasi-Monte Carlo estimators with variance no larger than the variance of  Monte Carlo estimators for functions monotone in each variable. Her result relies on a pairwise negative dependence property that is, in particular, satisfied by $(0,m,2)$-nets.
In this note we establish that the same result holds true in arbitrary dimension $d$ for a type of randomized lattice point sets that we call randomly shifted and jittered rank-$1$ lattices.
We show that the details of the randomization are crucial and that already small modifications may destroy the pairwise negative dependence property.
\end{abstract}

\section{Introduction}

Monte Carlo (MC) sampling is a frequently used method in stochastic simulation and in multivariate numerical integration.
Let $p_1, \ldots, p_N$ be independent random points, uniformly distributed on
the $d$-dimensional unit cube $[0,1]^d$. For a given square-integrable function $f: [0,1]^d \to \R$ we consider the MC estimator (or quadrature)
\begin{equation}\label{MC_estimator}
\mu^{MC}(f) = \frac{1}{N} \sum_{i=1}^N f(p_i)
\end{equation}
to estimate the expected value (or integral)
\begin{equation*}
I(f) = \int_{[0,1]^d} f(u) \, du.
\end{equation*}

An advantage of the MC estimator is that already under the very mild assumption on $f$ to be square integrable, it  converges to $I(f)$ for $N\to \infty$ with convergence rate $1/2$.
Although the convergence rate is far away from beeing impressive, it has the invaluable advantage that it does not depend on the numbers of variables $d$ (or, to be precise, at least not directly, see, e.g., \cite{SW04}).

Some dependent sampling schemes are in many respects superior to MC sampling.
An example are randomized quasi-Monte Carlo (RQMC) methods.
They ensure, for instance, faster convergence rates  for numerical integration of sufficiently smooth functions, they exhibit much smaller  asymptotic discrepancy measures, and their sample points have more evenly distributed lower dimensional projections (see, e.g., \cite{DKS13, DP10, Lem09}).

It would be desirable to have dependent sampling schemes sharing some of these favorable properties, and that are, with respect to other objectives, at least as good as MC sampling schemes.

Recently, there has been some research in this direction. In \cite{GH18, Heb12} the authors showed that a specific negative dependence property of RQMC point sets guarantees that they satisfy the same pre-asymptotic probabilistic discrepancy bounds (with explicitly  revealed dependence on the number of points $N$ as well as on the dimension $d$) as MC points.
Here the negative dependence property relies on
the common distribution of all sample points.
Related and further results in this direction can be found in \cite{DDG18, WGH19}.

In \cite{Lem17} Christiane Lemieux showed that  another negative dependence property of RQMC points ensures, in particular, that the variance of the corresponding RQMC estimator for functions that are monotone with respect to each variable is never larger than the variance of the MC estimator.
The property relies solely on the distribution of single points and on the common distribution of pairs of points. For a precise definition of this property we need to introduce some terminology.

Let $N,d \in \mathbb{N}.$ We call a randomized point set $\mathcal{P} = (p_j)_{j = 1}^N$ a \emph{sampling scheme} if every single $p \in \mathcal{P}$ is distributed uniformly in $[0,1)^d$ and the vector $(p_1, \ldots, p_N)$ is exchangeable, meaning that for any permutation $\pi$ of $[N]$ it holds that the law of $(p_1, \ldots, p_N)$ is the same as the law of $(p_{\pi(1)}, \ldots, p_{\pi(N)}).$ The assumption of exchangeability is only of technical nature and poses no additional constraints on a sampling scheme seen as a point set.

 Let us denote by $\mathcal{C}_1^d$ the set of all \emph{boxes anchored at} $1,$ i.e.
$$\mathcal{C}_1^d = \{[x,1) \, | \, x \in [0,1)^d \}.$$
A sampling scheme $\mathcal{P} = (p_j)_{j = 1}^N$ is \emph{pairwise negatively dependent} if for every $Q,R \in \mathcal{C}_1^d$ it holds
\begin{equation*}
\Prob(p_1 \in Q, p_2 \in R) \leq \Prob(p_1 \in Q)\Prob(p_2 \in R).
\end{equation*}

 In \cite{Lem17} this type of sampling is called \emph{negatively upper orthant dependent (NUOD) sampling scheme}. Note by the way that in the corresponding definition \cite[p.230]{Lem17} below formula $(5),$ there is a typing error: it should be $T({\bf{u}}, {\bf{v}},\tilde{P}_n) \leq \prod_{l = 1}^s (1-u_l)(1-v_l)$ and not $T({\bf{u}}, {\bf{v}},\tilde{P}_n) \geq \prod_{l = 1}^s (1-u_l)(1-v_l).$   Combining Proposition 3 and Remark 8 from \cite{Lem17} one sees that pairwise negatively dependent sampling schemes  lead to estimators of integrals with variance not larger then the variance of MC estimators for bounded quasimonotone functions. For the definition of quasimonotonicity we refer to \cite[Definition 3]{Lem17}.

Let $p_j = (p_j^{(1)},\ldots, p_j^{(d)}), j = 1, \ldots, N.$ If for every $i = 1, \ldots, d,$ the distribution of $(p_j^{(i)})_{j = 1}^N$ given the distribution of $(p_j^{(k)})_{j = 1}^N, k = 1, \ldots, i-1,$ is pairwise negatively dependent we say that the sampling scheme $(p_j)_{j = 1}^N$ is \emph{conditionally negatively quadrant dependent} (conditionally NQD). Note that this holds in particular if $(p_1^{(i)}, p_2^{(i)})_{i = 1}^d$ are independent and for every $i = 1, \ldots, d,$  and every $q,r \in [0,1)$ we have
$$\Prob(p_1^{(i)} \in [q,1), p_2^{(i)} \in [r,1)) \leq \Prob(p_1 \in [q,1)) \Prob(p_2 \in [r,1)),$$
in which case we talk of a \emph{coordinatewise independent NQD sampling scheme}. Christiane Lemieux showed in \cite[Corollary 2]{Lem17} that conditionally NQD sampling schemes provide RQMC estimators of integrals with variance no bigger then the variance of the MC estimator if the integrand is monotone in each coordinate. We remark here that the definition of the NQD property given in \cite{Lem17} differs slightly from the one given by us, but both definitions are easily seen to be equivalent.

Moreover, Lemieux was able to establish that scrambled $(0,m,2)-$nets are actually pairwise negatively dependent and conditionally NQD sampling schemes, see \cite[Corollary 1, proof of Proposition 12]{Lem17}. In this note we prove that the same is true for suitably randomized rank-1 lattices in arbitrary dimension. As we will demonstrate, the specific way of randomization turns out to be important.


\section{Simple Stratified Sampling and Multidimensional Extensions}
 One of the most frequently used one-dimensional sampling schemes is the \emph{simple stratified sampling}, defined in the following way: let $\pi$ be a uniformly chosen permutation of $\{1, \ldots, N\}$ and let $(R_j)_{j = 1}^N$ be independent random variables distributed uniformly on $(0,1].$ Moreover, $(R_j)_j$ is independent of $\pi.$ We put
$$p_j := \frac{\pi(j) - R_j}{N}, \hspace{3ex} j = 1, \ldots, N. $$
Effectively, one is considering the partition $I_j := [\tfrac{j-1}{N}, \tfrac{j}{N}), j = 1, \ldots, N,$ of the unit interval and in every element of the partition putting one point, independently of all the other points.

Formally, at least for $N = b^m,$ where $b \geq 2, m \geq 1$ are integers, pairwise negative dependence of stratified sampling may be deduced from $\cite{Lem17}.$ Still, the proof presented there is rather involved, since it shall easily generalize to a proof of pairwise negative dependence of scrambled $(0,m,2)-$nets. For this reason we present here an easy argument yielding pairwise negative dependence of simple stratified sampling. Similar results may be also found in the literature, see e.g. Lemma $3.4.$ in \cite{GH18}.
\begin{lemma}\label{SSSpND}
 Simple stratified sampling $\mathcal{P} = (p_j)_{j = 1}^N$ is pairwise negatively dependent.
\end{lemma}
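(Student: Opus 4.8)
The plan is to condition on the two strata occupied by $p_1$ and $p_2$ and then reduce the claimed inequality to Chebyshev's sum inequality. Since simple stratified sampling is one-dimensional, the boxes anchored at $1$ are simply the half-open intervals $[q,1)$ and $[r,1)$ with $q,r\in[0,1)$, and because each $p_j$ is uniformly distributed on $[0,1)$ the asserted bound reads
\[
\Prob\big(p_1\in[q,1),\,p_2\in[r,1)\big)\le (1-q)(1-r).
\]
First I would record that the random vector $(\pi(1),\pi(2))$ is uniformly distributed on the set of ordered pairs of distinct elements of $\{1,\dots,N\}$, and that conditionally on $\{\pi(1)=k,\pi(2)=l\}$ the points $p_1$ and $p_2$ are independent and uniformly distributed on $I_k$ and $I_l$, respectively; this uses that $(R_j)_j$ are mutually independent and independent of $\pi$. (For $N=1$ there is no pair and nothing to prove, so assume $N\ge 2$.)

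Next, set $a_k:=N\,\l\big(I_k\cap[q,1)\big)=\Prob\big(p_1\in[q,1)\mid \pi(1)=k\big)$ and analogously $b_k:=N\,\l\big(I_k\cap[r,1)\big)$. Conditioning on $(\pi(1),\pi(2))$ then gives
\begin{align*}
\Prob\big(p_1\in[q,1),\,p_2\in[r,1)\big)
&=\frac{1}{N(N-1)}\sum_{\substack{k,l=1\\k\ne l}}^{N}a_k b_l\\
&=\frac{1}{N(N-1)}\Big(\sum_{k=1}^N a_k\Big)\Big(\sum_{l=1}^N b_l\Big)-\frac{1}{N(N-1)}\sum_{k=1}^N a_kb_k.
\end{align*}
Since $\sum_{k}a_k=N(1-q)$ and $\sum_k b_k=N(1-r)$, a one-line rearrangement shows that the desired inequality is equivalent to
\[
N\sum_{k=1}^N a_kb_k\ \ge\ \Big(\sum_{k=1}^N a_k\Big)\Big(\sum_{k=1}^N b_k\Big).
\]

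The final step is to observe that both sequences $(a_k)_{k=1}^N$ and $(b_k)_{k=1}^N$ are non-decreasing in $k$: indeed $a_k$ equals $0$ while $I_k$ lies entirely to the left of $q$, then takes (at most) one intermediate value in $(0,1)$, and equals $1$ once $I_k$ lies entirely to the right of $q$, and likewise for $b_k$ with $r$ in place of $q$. Hence the two sequences are similarly ordered, and Chebyshev's sum inequality yields exactly the displayed estimate, which completes the proof. I do not expect a genuine obstacle here; the only points requiring a little care are the bookkeeping in the conditioning step (in particular using that the single points are uniform, so that $\Prob(p_1\in[q,1))=1-q$) and noticing that the resulting algebraic inequality is precisely Chebyshev's sum inequality for monotone sequences.
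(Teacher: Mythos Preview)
Your argument is correct. The conditioning on $(\pi(1),\pi(2))$ is justified exactly as you say, the reduction to $N\sum_k a_kb_k\ge(\sum_k a_k)(\sum_k b_k)$ is a clean rearrangement, and the monotonicity of the sequences $(a_k)$ and $(b_k)$ makes this precisely Chebyshev's sum inequality.

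Your route differs from the paper's. The paper does not symmetrise over both strata; instead it assumes without loss of generality that $R\subset Q$, fixes the indices $\eta,\rho$ with $q\in I_\eta$, $r\in I_\rho$, and computes the conditional probability $\Prob(p_1\in Q\mid p_2\in R)$ directly by summing over the stratum containing $p_2$. This forces a two-case split according to whether $\eta<\rho$ or $\eta=\rho$, and each case is finished by an explicit arithmetic evaluation. Your approach trades this case analysis for the observation that the resulting bilinear inequality is a classical rearrangement-type statement; it is shorter, avoids the WLOG reduction, and would extend immediately to any one-dimensional stratification into intervals of possibly unequal lengths (since the monotonicity of $a_k,b_k$ persists). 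The paper's computation, on the other hand, gives exact values of the conditional probability and makes the strictness of the inequality transparent.
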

\begin{proof}
Let $Q = [q,1), R = [r,1)$ be two boxes anchored at $1.$ Without loss of generality we may assume $R \subset Q.$ We aim at showing
$$ \Prob(p_1 \in Q | p_2 \in R) \leq \lambda(Q).$$
Let $\eta, \rho$ be such that $q \in I_{\eta}, r \in I_{\rho}.$ Define $\epsilon_q := \eta - Nq, \epsilon_r := \rho - Nr.$ We are considering two cases. In the first case $\eta < \rho.$ Then it follows
\begin{align*}
& \Prob(p_1 \in Q | p_2 \in R) = \Prob(p_1 \in Q, p_2 \in I_{\rho} | p_2 \in R) + \sum_{k = \rho + 1}^N \Prob(p_1 \in Q, p_2 \in I_k | p_2 \in R)
\\
& = \frac{N - \eta - 1 + \epsilon_q}{N}\frac{\epsilon_r}{N \lambda(R)} +   \frac{N - \eta - 1 + \epsilon_q}{N} \frac{N - \rho}{N \lambda(R)}
\\
& = \frac{N - \eta - 1 + \epsilon_q}{N} < \frac{N - \eta + \epsilon_q}{N} = \lambda(Q).
\end{align*}
In the second case, $\eta = \rho$ and
\begin{align*}
&\Prob(p_1 \in Q | p_2 \in R) = \Prob(p_1 \in Q, p_2 \in I_{\eta} | p_2 \in R) + \sum_{k = \eta+1}^N \Prob(p_1 \in Q, p_2 \in I_k | p_2 \in R)
\\
& = \frac{N - \eta}{N} \frac{\epsilon_r}{N \lambda(R)} +  \frac{N - \eta - 1 + \epsilon_q}{N} \frac{N - \eta}{N \lambda(R)} < \frac{N- \eta + \epsilon_q}{N} = \lambda(Q).
\end{align*}
\end{proof}

Multidimensional extensions of simple stratified sampling include
\begin{enumerate}
\item[a)] stratified sampling in $[0,1)^d$, where the $N$ strata are axis parallel boxes,
\item[b)] Latin hypercube sampling,
\item[c)] randomly shifted and jittered rank-1 lattice,
\item[c)] fully scrambled $(0,m,d)-$nets in base $b \in \mathbb{N}_{\geq 2}.$
\end{enumerate}
In this note we focus on the randomly shifted and jittered rank-1 lattices and on Latin hypercube sampling, see sections \ref{R1L} and \ref{LHS}, respectively. For information on scrambled $(0,m,d)-$nets we refer to, e.g. \cite{DP10,Lem09,Lem17,Owe95,Owe97}. A generalization of stratified sampling is discussed, e.g., in \cite{WGH19}.

\subsection{Randomly Shifted and Jittered Rank-1 Lattice}\label{R1L}
Let $N$ be prime. We denote $\mathbb{F} := \mathbb{F}_{N}:=\{0,1,\ldots, N-1\}$. Moreover, $\mathbb{F}^* := \mathbb{F} \setminus \{0\}.$ We also put $\widetilde{\mathbb{F}} := \frac{1}{N} \mathbb{F}$ and $\widetilde{\mathbb{F}}^* := \tfrac{1}{N} \mathbb{F}^*.$

A discrete subgroup $L$ of the $d-$dimensional torus $\mathbb{T}^d$ (where $\mathbb{T}^d = [0,1)^d,$ the addition of two elements of $\mathbb{T}^d$ and the multiplication with reals is to be taken componentwise modulo $1.$ ) is called a lattice. A set $(y_j)_{j = 1}^N$ is a rank-1 lattice if for some $g \in (\widetilde{\mathbb{F}}^*)^d$ it admits a representation
$$y_j = (j-1)g \,  \modulo 1, \quad  j = 1,\ldots, N.$$ In this case $g$ is called a generating vector of the lattice. Note that, in particular, a rank-1 lattice is a cyclic subgroup of the torus.

We remark that our definition differs from the usual one in that we allow only for generating vectors $g$ from $(\widetilde{\mathbb{F}}^*)^d$ and not from $\widetilde{\mathbb{F}}^d,$ which saves us from considering some degenerate cases.

We want now to define a sampling scheme based on rank-1 lattices. To this end let $(y_j)_{j = 1}^N$ be a rank-1 lattice with generating vector chosen uniformly at random from $(\widetilde{\mathbb{F}}^*)^d.$ Let $S$ be distributed uniformly on $\widetilde{\mathbb{F}}^d,$ $J_j, j = 1, \ldots, N$ be uniformly distributed on $[0, \frac{1}{N})^d$ and $\pi$ be a uniformly chosen permutation of $\{1,\ldots, N\}.$ Moreover, let all of the aforementioned random variables be independent. We put
$$p_{j}:= y_{\pi(j)} + S + J_j \text{ mod } 1, \hspace{3ex} j = 1, \ldots, N,$$
and call the sampling scheme $\mathcal{P} = (p_j)_{j = 1}^N$ a \emph{randomly shifted and jittered rank-1 lattice (RSJ rank-1 lattice) }.
Putting it in words: we first take a rank-1 lattice with a random generator and symmetrize it. Then we shift the lattice uniformly on the torus, where the shift has resolution $\frac{1}{N}$. In the last step we jitter every point independently of all the other points.

\subsection{Latin Hypercube Sampling}\label{LHS}
Let $(\pi_i)_{i = 1}^d$ be independent uniformly chosen permutations of $\{1,\ldots, N\},$ and $U^{(i)}_j, i = 1,\ldots, d, j = 1, \ldots, N$ be independent random variables distributed uniformly on $[0,1)$ and independent also of the permutations. A sampling scheme $(p_j)_{j=1}^N$ is called a \emph{Latin hypercube sampling} if the $i-$th coordinate of the $j-$th point $p_j^{(i)}$ is given by
\begin{equation}\label{LHS_def}
p_j^{(i)} = \frac{\pi_{i}(j) - U^{(i)}_j}{N}, \hspace{3ex} i = 1,\ldots, d, j = 1,\ldots, N.
\end{equation}
What one intuitively does is the following: one cuts $[0,1)^d$ into slices $(S_{k,j})_{j = 1}^N, k = 1, \ldots, d,$ given by
$$S_{k,j} = \prod_{j = 1}^{k-1}[0,1) \times [\tfrac{j-1}{N},\tfrac{j}{N}) \times \prod_{j = k+1}^d [0,1) $$
and places $N$ points in such a way that in every slice there is exactly one point.

Latin hypercube sampling was introduced in \cite{MBC79}. An earlier variant, known as lattice sampling, is due to \cite{Pat54}. There one simply substitutes the random variables $U^{(i)}_j$ in (\ref{LHS_def}) by constant values $\frac{1}{2}.$   The negative dependence properties of LHS were investigated in \cite{GH18}.

\section{Pairwise Negative Dependence of RSJ Rank-1 Lattice and LHS}
Our aim is now to show that RSJ rank-1 lattice is pairwise negatively dependent. In the course of the proof we will demonstrate that the bivariate copulas (in this case: cummulative distribution functions of a pair of points) of RSJ rank-1 lattice and LHS are actually the same, and then prove the result for LHS.

 Recall that for $d = 1$ RSJ rank-1 lattice and LHS are nothing else but simple stratified sampling, pairwise negative dependence is therefore settled by Lemma \ref{SSSpND}.

Now we may turn our attention to the more interesting multidimensional case. Let $\mathcal{P} = (p_j)_{j = 1}^N$ be a RSJ rank-1 lattice in $[0,1)^d, d \geq 2.$
Put
$$D = \{(f_1,f_2) \in \mathbb{F} \times \mathbb{F} \, | \, f_1 \neq f_2 \}.$$
A random variable $(p_1, p_2)$ having uniform distribution on pairs of distinct points from $\mathcal{P}$ may be generated in the following way: let $(m_1,m_2)$ be uniformly distributed on $D$ and let the random variables $g, S, J_1,\ldots, J_N$ be independent. The generating vector $g$ is distributed uniformly on  $(\widetilde{\mathbb{F}}^*)^d,$ the shift $S$ on $\widetilde{\mathbb{F}}^d.$ If $J_j, j = 1, \ldots, N,$ are all equal to $0,$ then we speak of the \emph{discrete model}, and if $J_j, j =1,\ldots, N,$ are distributed uniformly on $[0,\frac{1}{N})^d,$ we speak of the \emph{continuous model.} Even though our aim is to investigate the continuous model, the discrete model will turn out to be helpful to highlight the combinatorial nature of the problem. Finally we put for $j=1,2, \,\, i = 1,\ldots,d,$
$$p^{(i)}_{j} = (g^{(i)}m_j + S^{(i)}) \modulo 1 + J^{(i)}_j.$$

\begin{lemma}\label{JointUnifDistLemma}
In the discrete model for $(z_1,z_2) \in \tfrac{1}{N} D^d $ and  $(a,b) \in D$ it holds
$$\Prob(p_1 = z_1, p_2 = z_2 | m_1 = a, m_2 = b) = \frac{1}{(N(N-1))^d}.$$
In particular, in the discrete model
$$\Prob(p_1 = z_1, p_2 = z_2) = \frac{1}{(N(N-1))^d}.$$
\end{lemma}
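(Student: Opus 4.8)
The plan is to exploit the product structure of the randomization and reduce the statement to a one-dimensional counting problem. In the discrete model the $i$-th coordinates $p^{(i)}_1, p^{(i)}_2$ depend only on $(g^{(i)}, S^{(i)})$ and on $(m_1, m_2)$, and by construction the pairs $(g^{(i)}, S^{(i)})$, $i = 1, \ldots, d$, are mutually independent and independent of $(m_1,m_2)$. Hence, conditionally on $\{m_1 = a, m_2 = b\}$, the law of $(p_1, p_2)$ is a product over the $d$ coordinates, and it suffices to prove the one-dimensional identity
$$\Prob\bigl(p^{(i)}_1 = z_1^{(i)},\ p^{(i)}_2 = z_2^{(i)} \,\big|\, m_1 = a, m_2 = b\bigr) = \frac{1}{N(N-1)}$$
for every $i$ whenever $z_1^{(i)}, z_2^{(i)} \in \widetilde{\mathbb{F}}$ are distinct; multiplying over $i = 1, \ldots, d$ then yields the asserted value $\tfrac{1}{(N(N-1))^d}$.

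For the one-dimensional step I would pass to integer representatives modulo $N$. Write $g^{(i)} = c/N$ with $c$ uniform on $\mathbb{F}^*$, $S^{(i)} = s/N$ with $s$ uniform on $\mathbb{F}$, these being independent, and $z_1^{(i)} = u/N$, $z_2^{(i)} = v/N$ with $u \neq v$ in $\mathbb{F}$. Since $p^{(i)}_j = (g^{(i)} m_j + S^{(i)}) \bmod 1$ in the discrete model, the event in question is precisely the system $ca + s \equiv u$, $cb + s \equiv v \pmod{N}$. Subtracting the two congruences isolates the generator: $c(a-b) \equiv u - v \pmod{N}$, and because $N$ is prime and $a \neq b$ the factor $a-b$ is invertible modulo $N$, so $c$ is forced to a single residue; fixing that value of $c$, the first congruence then determines $s$ uniquely in $\mathbb{F}$. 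Counting the successful choices gives $\Prob = \tfrac{1}{N-1}\cdot\tfrac1N$, provided the forced residue of $c$ really lies in the admissible set $\mathbb{F}^*$.

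The only point that needs genuine care --- and the sole place where the hypothesis $(z_1,z_2) \in \tfrac1N D^d$ is used --- is checking that the unique $c$ solving $c(a-b) \equiv u-v$ is nonzero mod $N$; this is immediate since $u \neq v$ gives $u - v \not\equiv 0$. (If instead $z_1^{(i)} = z_2^{(i)}$ for some coordinate, the probability would be $0$, the value $g^{(i)} = 0$ being excluded by our convention $g \in (\widetilde{\mathbb{F}}^*)^d$, which is why the statement is restricted to $\tfrac1N D^d$.) Finally, for the ``in particular'' assertion I would note that the conditional probability just computed is the same for every $(a,b) \in D$; since $(m_1,m_2)$ is uniform on $D$ (with $|D| = N(N-1)$) and independent of $(g,S)$, the law of total probability yields $\Prob(p_1 = z_1, p_2 = z_2) = \sum_{(a,b)\in D} \tfrac{1}{N(N-1)} \cdot \tfrac{1}{(N(N-1))^d} = \tfrac{1}{(N(N-1))^d}$. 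Beyond that small admissibility check the argument is routine linear algebra over $\mathbb{Z}/N\mathbb{Z}$, so I do not anticipate any real obstacle.
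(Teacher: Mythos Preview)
Your proposal is correct and follows essentially the same approach as the paper: both reduce to the one-dimensional linear system over $\mathbb{Z}/N\mathbb{Z}$ and use that, conditional on $(m_1,m_2)=(a,b)$, the coordinates are independent. The paper phrases the multidimensional step as an induction on $d$ rather than invoking the product structure directly, but the content is identical; if anything, your version is slightly more careful in explicitly verifying that the forced value of $c$ lies in $\mathbb{F}^*$, a point the paper leaves implicit.
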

\begin{proof}
In the first step we show that for any $i = 1,\ldots, d$
\begin{equation}\label{OneDimInd}
\Prob(p_1^{(i)} = z_1^{(i)}, p_2^{(i)} = z_2^{(i)} | m_1 = a, m_2 = b) = \frac{1}{N(N-1)}.
\end{equation}
It suffices to show that given $a, b, z_1^{(i)}, z_2^{(i)}$ the system of equations
\[
\left\{
\begin{array}{l}
z_1^{(i)} = \gamma a + \nu \hspace{3ex} \modulo 1 \\
z_2^{(i)} = \gamma b + \nu \hspace{3ex} \modulo 1
\end{array}
\right.
\]
has exactly one solution $\gamma \in \widetilde{\mathbb{F}}^*, \nu \in \widetilde{\mathbb{F}}.$ One solution is given by
\[
\left\{
\begin{array}{l}
\gamma = (z_1^{(i)} - z_2^{(i)})(a - b)^{-1} \modulo 1 \\
\nu = (z_1^{(i)} - \gamma a) \modulo 1
\end{array}
\right.
\]
and it is indeed unique, since the determinant of the associated matrix is $(a - b) \neq 0 \modulo N.$
Now we are ready to prove the claim of the theorem by induction on the dimension. Suppose the statement has already been proven for dimension $d$ and we want to prove it for dimension $(d+1).$ For any $(d+1)-$dimensional (possibly random) vector $W$ we denote by $\widetilde{W}$ the projection onto its first $d$ coordinates. We have
\begin{align*}
&\Prob(p_1 = z_1, p_2 = z_2 | m_1 = a, m_2 = b)
\\
& = \Prob(p_1^{(d+1)} = z_1^{(d+1)}, p_2^{(d+1)} = z_2^{(d+1)} | m_1 = a, m_2 = b, \widetilde{p}_1 = \widetilde{z}_1, \widetilde{p}_2 = \widetilde{z}_2)
\\
& \times \Prob(\widetilde{p}_1 = \widetilde{z}_1, \widetilde{p}_2 = \widetilde{z}_2| m_1 = a, m_2 = b).
\end{align*}
By induction assumption $\Prob(\widetilde{p}_1 = \widetilde{z}_1, \widetilde{p}_2 = \widetilde{z}_2| m_1 = a, m_2 = b) = \frac{1}{(N(N-1))^d},$
hence it suffices to show
\begin{align*}
& \Prob(p_1^{(d+1)} = z_1^{(d+1)}, p_2^{(d+1)} = z_2^{(d+1)} | m_1 = a, m_2 = b, \widetilde{p}_1 = \widetilde{z}_1, \widetilde{p}_2 = \widetilde{z}_2) = \frac{1}{N(N-1)}.
\end{align*}
Note now that conditioned on $\{m_1 = a, m_2 = b\},$ the events $\{p_1^{(d+1)} = z_1^{(d+1)}, p_2^{(d+1)} = z_2^{(d+1)}\}$ and $\{\widetilde{p}_1 = \widetilde{z}_1, \widetilde{p}_2 = \widetilde{z}_2 \}$ are independent and so by induction hypothesis we obtain

\begin{align*}
& \Prob(p_1^{(d+1)} = z_1^{(d+1)}, p_2^{(d+1)} = z_2^{(d+1)} | m_1 = a, m_2 = b, \tilde{p}_1 = \tilde{z}_1, \tilde{p}_2 = \tilde{z}_2)
\\
&= \Prob(p_1^{(d+1)} = z_1^{(d+1)}, p_2^{(d+1)} = z_2^{(d+1)} | m_1 = a, m_2 = b) = \frac{1}{N(N-1)}.
\end{align*}
This proves the first statement. The second statement follows immediately by the law of total probability.
\end{proof}

\begin{corollary}\label{JointUnifDistCor}
\begin{itemize}
\item[(i)]
The random variables
$$(p_1^{(i)}, p_2^{(i)}), \hspace{3ex} i=1, \ldots, d,$$
are independent and identically distributed in the discrete as well as in the continuous model.
\item[(ii)]
Randomly shifted and jittered rank-1 lattice has the same bivariate distributions as Latin hypercube sampling.
\end{itemize}
\end{corollary}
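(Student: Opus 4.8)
The plan is to deduce both statements from Lemma~\ref{JointUnifDistLemma}, exploiting that in the discrete model the joint law of $(p_1,p_2)$ is a product measure over the $d$ coordinates. First I would observe that the unconditional part of Lemma~\ref{JointUnifDistLemma} says precisely that, in the discrete model, $(p_1,p_2)$ is uniformly distributed on
$$\Big\{(z_1,z_2)\in(\widetilde{\mathbb{F}}^d)^2 \;:\; z_1^{(i)}\neq z_2^{(i)}\ \text{for all }i\Big\}=\prod_{i=1}^d\big\{(w_1,w_2)\in\widetilde{\mathbb{F}}^2: w_1\neq w_2\big\}.$$
Being the uniform measure on a Cartesian product of $d$ identical finite sets, this equals the $d$-fold product of the uniform measure on $\{(w_1,w_2)\in\widetilde{\mathbb{F}}^2:w_1\neq w_2\}$; hence the pairs $(p_1^{(i)},p_2^{(i)})$, $i=1,\dots,d$, are i.i.d.\ in the discrete model.

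To pass to the continuous model, I would write $p_j^{(i)}=(g^{(i)}m_j+S^{(i)})\bmod 1+J_j^{(i)}$, where the first summand is the discrete-model quantity and the family $\{J_j^{(i)}:j\in\{1,2\},\,i\in\{1,\dots,d\}\}$ is, by the independence of $J_1$ and $J_2$ and the independence of the coordinates of each $J_j$, an independent family of $\mathrm{Unif}[0,\tfrac1N)$ variables independent of everything coming from the discrete model. Since $(p_1^{(i)},p_2^{(i)})$ is a function of $\big((g^{(i)}m_1+S^{(i)})\bmod 1,\,(g^{(i)}m_2+S^{(i)})\bmod 1,\,J_1^{(i)},\,J_2^{(i)}\big)$ alone, and these blocks are independent across $i$ (and identically distributed), the pairs $(p_1^{(i)},p_2^{(i)})$ remain i.i.d.\ in the continuous model; this is (i).

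For (ii), the key point is that the same coordinatewise factorization holds for Latin hypercube sampling: the permutations $\pi_1,\dots,\pi_d$ are independent and the variables $U^{(i)}_j$ are independent across $i$, so the pairs $(p_1^{(i)},p_2^{(i)})$ of an LHS are i.i.d.\ as well. Consequently the bivariate law of a RSJ rank-$1$ lattice is the $d$-fold product of its one-dimensional bivariate marginal, and likewise for LHS, so it suffices to show that these one-dimensional marginals coincide. This is immediate from the remark recalled at the start of this section that for $d=1$ both the RSJ rank-$1$ lattice and the LHS are nothing but simple stratified sampling, whence (ii). I do not expect a genuine obstacle here; the only points requiring care are the bookkeeping of the independence relations in the passage from the discrete to the continuous model and the verification that LHS likewise factors over coordinates, both of which are routine.
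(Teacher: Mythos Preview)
Your proposal is correct and follows essentially the same route as the paper. Both arguments rest on Lemma~\ref{JointUnifDistLemma}: the joint law of $(p_1,p_2)$ in the discrete model is uniform on $\prod_{i=1}^d\{(w_1,w_2)\in\widetilde{\mathbb{F}}^2:w_1\neq w_2\}$, hence a product over coordinates, and adding independent jitters preserves this factorization. The only noteworthy differences are presentational. First, you are more explicit than the paper about the passage to the continuous model in (i); the paper leaves this step implicit. The one place to be careful is your sentence ``these blocks are independent across $i$'': the discrete-model pairs $\big((g^{(i)}m_1+S^{(i)})\bmod 1,\,(g^{(i)}m_2+S^{(i)})\bmod 1\big)$ all depend on the common pair $(m_1,m_2)$, so their independence across $i$ is not obvious from the construction but is precisely the content of the discrete case you just established --- you might say so explicitly. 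Second, for (ii) the paper computes the per-coordinate LHS probability $\Prob(\widetilde p_1^{(i)}\in[z_1^{(i)},z_1^{(i)}+\tfrac1N),\,\widetilde p_2^{(i)}\in[z_2^{(i)},z_2^{(i)}+\tfrac1N))=\tfrac{1}{N(N-1)}$ directly and matches the jittering, whereas you invoke the observation (stated in the paper just before the corollary) that in $d=1$ both schemes coincide with simple stratified sampling. Both routes are equivalent and equally short.
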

\begin{proof}
To prove (i) note that for given $(z_1,z_2) \in \tfrac{1}{N}D^d$ we obtain from Lemma \ref{JointUnifDistLemma} applied to the one-dimensional case
$$ \Prob(p_1^{(i)} = z_1^{(i)}, p_2^{(i)} = z_2^{(i)}) = \frac{1}{N(N-1)}, \hspace{3ex} i = 1,\ldots,d. $$
If $I \subset \{1,\ldots,d\},$ then by the same lemma applied to the $|I|-$dimensional case we see that
$$\Prob\left(\bigcap_{i \in I} \{p_1^{(i)} = z_1^{(i)}, p_2^{(i)} = z_2^{(i)} \}\right) = \frac{1}{(N(N-1))^{|I|}} = \prod_{i \in I} \Prob(p_1^{(i)} = z_1^{(i)}, p_2^{(i)} = z_2^{(i)}),$$
which yields the claim.

For (ii) let $(\widetilde{p}_j)_{j=1}^N$ be a LHS. Due to symmetrization $\widetilde{p}_1,\ldots, \widetilde{p}_N,$ are exchangeable and clearly we have that the random variables $(\widetilde{p}_1^{(i)},\widetilde{p}_2^{(i)}), i = 1, \ldots, d,$ are independent. Furthermore for arbitrary $(z_1,z_2) \in \tfrac{1}{N}D^d$ and a fixed $i \in \{1,\ldots, d\}$ it holds
$$\Prob(\widetilde{p}_1^{(i)} \in [z_1^{(i)}, z_1^{(i)} + \tfrac{1}{N}), \widetilde{p}_2^{(i)} \in [z_2^{(i)}, z_2^{(i)} + \tfrac{1}{N})) = \frac{1}{N(N-1)}.$$
Since $\widetilde{p}_1, \ldots, \widetilde{p}_N,$ are also jittered independently in the intervals of volume $\tfrac{1}{N^d}$ the claim follows.
\end{proof}

\begin{remark}
Let $d \geq 2, N \geq 5, \mathcal{P} = (p_j)_{j = 1}^N$ be a RSJ rank-1 lattice and $\widetilde{\mathcal{P}} = (\widetilde{p}_j)_{j = 1}^N$ be a LHS in $[0,1)^d.$ If $t \geq 3,$ then the distributions of $(p_j)_{j = 1}^t$ and $(\widetilde{p}_j)_{j = 1}^t$ differ.

To see this consider the discrete model. We will show that given $a,b \in \tfrac{1}{N} D^d$ there exists exactly one point set $X,$ consisting of $N$ points and corresponding to a RSJ rank-1 lattice such that $a,b \in X,$ but there are $[(N-2)!]^{d-1}$ such point sets corresponding to LHS.

The statement about LHS is obvious, so we focus on the point set corresponding to RSJ rank-1 lattice. The existence of $X$ follows by taking the shift $a$ and the generating vector $(b-a) \modulo 1.$ To see uniqueness recall that a rank-1 lattice is a cyclic subgroup of $\mathbb{T}^d,$ therefore any difference of two distinct elements is a generator of the lattice and determines it uniquely. This means $(b-a) \modulo 1$ is a generator of the underlying lattice $L$ and $X = (L + a) \modulo 1. $
\end{remark}

\begin{theorem}\label{PairNegDep}
Let $(p_j)_{j = 1}^N$ be a RSJ rank-1 lattice (in that case let $N$ be prime) or a Latin hypercube sampling in $[0,1)^d$ and $Q,R \in \mathcal{C}^d_1.$ Then
$$\Prob(p_1 \in Q, p_2 \in R) \leq \Prob(p_1 \in Q) \Prob(p_2 \in R),$$
meaning that RSJ rank-1 lattices and Latin hypercube sampling are pairwise negatively dependent. Moreover, RSJ rank-1 lattice and LHS are coordinatewise independent NQD sampling schemes.
\end{theorem}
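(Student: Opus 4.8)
\textbf{Proof proposal for Theorem \ref{PairNegDep}.}

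The plan is to reduce both assertions to the one-dimensional Lemma \ref{SSSpND} by using the two corollaries just proved. First, by Corollary \ref{JointUnifDistCor}(ii) the RSJ rank-1 lattice and Latin hypercube sampling have the same bivariate distributions, i.e.\ the law of $(p_1,p_2)$ is the same for both; since pairwise negative dependence (an inequality for $\Prob(p_1\in Q, p_2\in R)$ together with the induced marginals) and the coordinatewise independent NQD property (independence of the pairs $(p_1^{(i)},p_2^{(i)})$ together with a one-dimensional inequality for each of them) are both formulated purely in terms of the joint law of the single pair $(p_1,p_2)$, it suffices to carry out the argument for LHS, for arbitrary $N$. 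So I would fix an LHS $(p_j)_{j=1}^N$ in $[0,1)^d$, noting that it is exchangeable (each $\pi_i\circ\sigma$ is again uniform) and that, by Corollary \ref{JointUnifDistCor}(i), the coordinate pairs $(p_1^{(i)},p_2^{(i)})$, $i=1,\ldots,d$, are mutually independent.

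Next I would observe that for each fixed $i$ formula \eqref{LHS_def} displays $(p_j^{(i)})_{j=1}^N$ as a one-dimensional simple stratified sampling, so Lemma \ref{SSSpND} applies and gives, for all $q,r\in[0,1)$, the inequality $\Prob(p_1^{(i)}\in[q,1),\,p_2^{(i)}\in[r,1))\le \Prob(p_1^{(i)}\in[q,1))\,\Prob(p_2^{(i)}\in[r,1))$. Together with the coordinatewise independence established above, this is exactly the definition of a coordinatewise independent NQD sampling scheme, which settles the final assertion of the theorem.

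For the pairwise negative dependence I would write $Q=\prod_{i=1}^d[q_i,1)$ and $R=\prod_{i=1}^d[r_i,1)$ with $q_i,r_i\in[0,1)$, use the independence of the coordinate pairs to factor $\Prob(p_1\in Q,\,p_2\in R)=\prod_{i=1}^d\Prob(p_1^{(i)}\in[q_i,1),\,p_2^{(i)}\in[r_i,1))$, bound each factor by the one-dimensional inequality of the previous step (all factors being nonnegative, so multiplying the $d$ inequalities is legitimate), and then recognize the resulting product as $\Prob(p_1\in Q)\,\Prob(p_2\in R)$, using that $p_1$ (respectively $p_2$) has independent coordinates. This yields the claimed bound.

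As for the main obstacle: with Lemma \ref{SSSpND} and Corollary \ref{JointUnifDistCor} already available there is essentially no hard step, since the real content has been front-loaded into those results. The only points deserving a moment's care are verifying that pairwise negative dependence and the coordinatewise independent NQD property genuinely depend only on the bivariate law of $(p_1,p_2)$ — so that the passage from RSJ lattices to LHS via Corollary \ref{JointUnifDistCor}(ii) is valid — and confirming exchangeability of LHS so that it qualifies as a sampling scheme; both are immediate.
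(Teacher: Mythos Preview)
Your proposal is correct and follows essentially the same route as the paper: factor $\Prob(p_1\in Q,\,p_2\in R)$ over coordinates using Corollary~\ref{JointUnifDistCor}, apply the one-dimensional Lemma~\ref{SSSpND} to each factor, and read off both pairwise negative dependence and the coordinatewise independent NQD property. The only cosmetic difference is that you first pass to LHS via Corollary~\ref{JointUnifDistCor}(ii) and then argue there, whereas the paper invokes Corollary~\ref{JointUnifDistCor}(i) directly for both schemes; note in particular that part~(i) is stated for the RSJ model, so for LHS the coordinatewise independence is better justified straight from the definition~\eqref{LHS_def} (as you implicitly do anyway).
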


\begin{proof}
We prove pairwise negative dependence first.
Put $Q = \prod_{i = 1}^d [q^{(i)}, 1), R = \prod_{i = 1}^d [r^{(i)},1).$ Due to Corollary \ref{JointUnifDistCor} and Lemma \ref{SSSpND}
\begin{align*}
& \Prob(p_1 \in Q, p_2 \in R) = \prod_{i = 1}^d  \Prob(p_1^{(i)} \in [q^{(i)},1), p_2^{(i)} \in [r^{(i)}, 1))
\\
& \leq \prod_{i = 1}^s  (1-q^{(i)})(1-r^{(i)}) \leq \Prob(p_1 \in Q) \Prob(p_2 \in R).
\end{align*}
That RSJ rank-1 lattice and LHS are coordinatewise independent NQD sampling schemes follows from  Corollary \ref{JointUnifDistCor} in conjunction with Lemma \ref{SSSpND}.
\end{proof}
Theorem \ref{PairNegDep} and \cite{Lem17} imply the following Corollary.
\begin{corollary}
Let $d,N \in \mathbb{N}$ and let $f:[0,1)^d \rightarrow \mathbb{R}$ be monotone in each coordinate. Denote by $\mu_{\mathcal{P}}$ a randomized QMC quadrature based on RSJ rank-1 lattice or LHS, using $N$ integration nodes and by $\mu^{MC}$ Monte Carlo quadrature using $N$ integration nodes. It holds
$$\Var(\mu_{\mathcal{P}}f) \leq \Var(\mu^{MC}f).$$
\end{corollary}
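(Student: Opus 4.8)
\noindent\emph{Proof plan.} The plan is to obtain the corollary as a routine combination of Theorem~\ref{PairNegDep} with the variance estimates of Lemieux recalled in the introduction, after reducing the claim to the sign of a single covariance. First I would record that both quadratures are unbiased: since each $p_j$ is uniformly distributed on $[0,1)^d$, one has $\E[\mu_{\mathcal P}f]=I(f)=\E[\mu^{MC}f]$, so the variance comparison is meaningful (I assume $f\in L^2([0,1)^d)$, otherwise both sides are infinite and there is nothing to prove). Using exchangeability of $(p_1,\ldots,p_N)$,
\[
\Var(\mu_{\mathcal P}f)=\frac1N\,\Var\big(f(p_1)\big)+\frac{N-1}{N}\,\mathrm{Cov}\big(f(p_1),f(p_2)\big),
\]
whereas for the Monte Carlo estimator the $N$ nodes are i.i.d.\ uniform on $[0,1)^d$, so $\Var(\mu^{MC}f)=\frac1N\Var(f(p_1))$ (the one-point marginal of $\mathcal P$ being uniform as well). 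Hence $\Var(\mu_{\mathcal P}f)-\Var(\mu^{MC}f)=\tfrac{N-1}{N}\,\mathrm{Cov}(f(p_1),f(p_2))$, and it suffices to show that this covariance is nonpositive whenever $f$ is monotone in each coordinate.

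For this I would invoke the structure already in place. By Theorem~\ref{PairNegDep} the RSJ rank-$1$ lattice (with $N$ prime) and LHS (with arbitrary $N$) are coordinatewise independent NQD sampling schemes, hence in particular conditionally NQD. Lemieux's \cite[Corollary~2]{Lem17} asserts precisely that for a conditionally NQD sampling scheme the associated RQMC quadrature has variance no larger than that of the Monte Carlo quadrature with the same number of nodes, provided the integrand is monotone in each coordinate; applying this to $\mathcal P$ yields the claim. (The rank-$1$ lattice part of the statement is to be read under the primality assumption on $N$ forced by the construction; the LHS part holds for every $N$.)

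For completeness I would indicate how the covariance estimate goes without black-boxing \cite{Lem17}. Reflecting, if necessary, those coordinates in which $f$ is decreasing via $x^{(i)}\mapsto 1-x^{(i)}$---an operation that leaves both the uniform marginals and the NQD property of each coordinate pair invariant---one may assume $f$ nondecreasing in every variable. Expanding $\mathrm{Cov}(f(p_1),f(p_2))$ by a multivariate Hoeffding-type identity and peeling off one coordinate at a time, one uses the coordinatewise independence to condition on the coordinates already revealed and then the one-dimensional inequality
$$\Prob\big(p_1^{(i)}\in[q,1),\ p_2^{(i)}\in[r,1)\big)\leq(1-q)(1-r)$$
supplied by Lemma~\ref{SSSpND}: integrating a monotone function against the signed measure of the resulting definite sign produces a nonpositive contribution, and summing over coordinates gives $\mathrm{Cov}(f(p_1),f(p_2))\leq 0$.

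All the substance of the corollary sits in Theorem~\ref{PairNegDep} and in \cite{Lem17}; their combination is elementary, and the only place where care is genuinely needed is the bookkeeping in the self-contained covariance expansion---keeping the conditioning on the already-revealed coordinates explicit and matching Lemieux's formulation of the NQD property to ours---together with correctly restricting the rank-$1$ lattice statement to prime $N$. I do not expect a genuine obstacle beyond these.
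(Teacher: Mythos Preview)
Your proposal is correct and follows essentially the same approach as the paper: the paper's proof consists solely of invoking \cite[Corollary~2]{Lem17} together with the fact (established in Theorem~\ref{PairNegDep}) that RSJ rank-$1$ lattices and LHS are coordinatewise independent NQD sampling schemes. Your explicit variance decomposition and the self-contained covariance sketch are additional elaboration not present in the paper, but the core argument is identical.
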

\begin{proof}
The claim follows by from \cite[Corollary 2]{Lem17} and the fact that RSJ rank-1 lattice and LHS are coordinatewise independent NQD sampling schemes.
\end{proof}
\subsection{Minimal Randomness for Randomly Shifted and Jittered Rank-1 Lattices}
In this section let $d \geq 2,$ and let $N \geq 5$ be a prime number. We want to argue that the randomization of rank-1 lattices proposed by us is in a way the minimal one leading to a pairwise negatively dependent sampling scheme. More precisely, we show that resigning from any step of the randomization (the random choice of the generating vector, the random uniform shift or the independent jittering) infringes either pairwise negative dependence or the sampling scheme property.

\begin{itemize}
\item[(i)]
First note that without the uniform shift we do not get a sampling scheme at all. Indeed, we have then $\Prob(p_1 \in [0,\tfrac{1}{N})^d) = \tfrac{1}{N}.$

\item[(ii)]
Now consider a situation in which we just shift all the points of the rank-1 lattice (possibly generated by a random vector) by a uniformly chosen vector on the torus. Then obviously the distances between the points on the torus remain unchanged.
Consider the distance function $\dist: [0,1)^2 \rightarrow [0,1)$ on the torus $\mathbb{T}^1$ given by
$$\dist(x,y):= \min \left( \max(x,y) - \min(x,y), 1 - \max(x,y) + \min(x,y) \right).$$
\begin{proposition}\label{fixedDistance}
Let $\mathcal{P} = (p_j)_{j = 1}^N$ be a sampling scheme such that for some $i = 1,\ldots, d,$ there exist a constant $0 <  \epsilon \leq \tfrac{1}{2}$ with
$$\Prob(\dist(p_1^{(i)}, p_2^{(i)}) \leq \epsilon) = 0.$$
Then $\mathcal{P}$ is not pairwise negatively dependent.
\end{proposition}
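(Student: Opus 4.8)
The plan is to exhibit, for a suitable choice of anchored boxes $Q$ and $R$, a violation of the defining inequality of pairwise negative dependence. The geometric idea is simple: if the $i$-th coordinates of two points $p_1,p_2$ can never be closer than $\eps$ on the torus, then knowing that $p_1^{(i)}$ lies very close to $1$ (say in $[1-\delta,1)$ for $\delta$ small compared to $\eps$) \emph{forbids} $p_2^{(i)}$ from lying in a neighbourhood of $1$ on both sides — i.e. it forbids $p_2^{(i)}$ from lying in an arc of length roughly $\eps$ wrapping around $1$. But an anchored box in the $i$-th coordinate is an interval $[r^{(i)},1)$, which only ``sees'' one side of $1$; the torus-distance constraint near $1$ also kills the arc $[0,\eps-\delta)$, which an anchored box does not touch. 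The trick is therefore to use the uniform-shift-invariance implicit in a sampling scheme — or rather, just to translate the picture — so that the ``excluded arc'' is captured by a genuine anchored box.

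Concretely, I would first reduce to dimension one by choosing $Q$ and $R$ to be full cubes in all coordinates $\neq i$, so that $\Prob(p_1\in Q,p_2\in R)=\Prob(p_1^{(i)}\in [q^{(i)},1),\,p_2^{(i)}\in[r^{(i)},1))$ and similarly for the marginals; this is legitimate because in a sampling scheme each $p_j$ is uniform, so the other coordinates contribute equal factors to both sides. Then I work purely on the torus $\mathbb{T}^1$ with a single pair $(X,Y):=(p_1^{(i)},p_2^{(i)})$, each marginally uniform on $[0,1)$, satisfying $\Prob(\dist(X,Y)\le\eps)=0$. The key step is to pick the two anchored intervals cleverly. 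Take $Q=[1-a,1)$ and $R=[1-a,1)$ for a small parameter $a\in(0,\eps)$ to be optimized, and compute: $\Prob(X\in Q)=\Prob(Y\in R)=a$, so the right-hand side is $a^2$. For the left-hand side, $\{X\in[1-a,1),\,Y\in[1-a,1)\}$ forces $\dist(X,Y)\le a<\eps$ \emph{unless} they straddle the wrap-around point — but both being in $[1-a,1)$ means $|X-Y|\le a$ directly, no wrap-around, so $\dist(X,Y)=|X-Y|\le a<\eps$. Hence $\Prob(X\in Q,\,Y\in R)=0\le a^2$, which is the \emph{wrong} direction. So this naive choice does not work, and I must instead exploit the constraint to make the joint probability \emph{larger} than the product.

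The right choice is to take $Q$ and $R$ to be \emph{large} anchored intervals, i.e. $q^{(i)}$ and $r^{(i)}$ close to $0$. Set $Q=R=[\tfrac12-\tfrac\eps4,1)$ say, or more cleanly pick the two boxes so that their complements $[0,q^{(i)})$ and $[0,r^{(i)})$ are short arcs near $0$ of length $\ell<\eps/2$. Then the right-hand side is $(1-\ell)^2$. For the left-hand side I claim $\Prob(X\notin Q \text{ or } Y\notin R)$ is strictly smaller than $2\ell-\ell^2$: indeed $\{X\in[0,\ell)\}$ forces $Y\notin[0,\ell)$ (since two points in the arc $[0,\ell)$ would have torus-distance $<\ell<\eps/2<\eps$, a contradiction), so the events $\{X\notin Q\}$ and $\{Y\notin R\}$ are \emph{disjoint} when $\ell$ is chosen below $\eps/2$ and both complementary arcs sit at the same end; more carefully one takes $[0,q^{(i)})$ and $[0,r^{(i)})$ to be arcs near $0$ whose \emph{union} has length $<\eps$. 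Then $\Prob(X\notin Q)+\Prob(Y\notin R)=\Prob(X\notin Q\text{ or }Y\notin R)$, giving $\Prob(X\in Q,Y\in R)=1-2\ell$. Comparing: $1-2\ell$ versus $(1-\ell)^2=1-2\ell+\ell^2$, we get $\Prob(X\in Q,Y\in R)=1-2\ell<1-2\ell+\ell^2=\Prob(X\in Q)\Prob(Y\in R)$ — still the wrong direction! I therefore need to flip once more: use \emph{small} boxes near $1$ but positioned so the forbidden arc near $0$ is forced, by a preliminary rotation of coordinates.

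The cleanest execution: exploit that $p_1^{(i)}$ is marginally uniform to \emph{condition}. Choose $R=[r,1)$ with $1-r$ small, and $Q=[q,1)$ with $1-q$ small, but now observe that conditionally on $\{Y\in[r,1)\}$ with $1-r<\eps/2$, the point $X$ is forbidden from the arc $(r-\eps, r+\eps)\bmod 1 \supset [r,1)\cup[0,\eps-(1-r))$, hence forbidden from $[q,1)$ as soon as $1-q<\eps-(1-r)$. That makes the \emph{conditional} probability $\Prob(X\in Q\mid Y\in R)=0$, so $\Prob(X\in Q,Y\in R)=0$, while $\Prob(X\in Q)\Prob(Y\in R)=(1-q)(1-r)>0$ — and $0\le(1-q)(1-r)$ is again consistent with negative dependence, not a violation. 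The genuine violation must come from looking at where the forbidden mass is \emph{pushed}: since $X$ avoids an arc of length $2\eps$ around $Y$, it is over-concentrated elsewhere. The natural statement to prove is the contrapositive phrased as: there exist $Q,R$ with $\Prob(p_1\in Q,p_2\in R)>\Prob(p_1\in Q)\Prob(p_2\in R)$, obtained by taking $Q,R$ to be anchored boxes whose $i$-th components are the two ``halves'' of the torus far from each other, e.g. $q^{(i)}$ near $1/2$. I would parametrize $Q$'s $i$-th component as $[\tfrac12,1)$ and compute $\Prob(p_1^{(i)}\ge\tfrac12,\,p_2^{(i)}\ge\tfrac12)$: the constraint $\dist>\eps$ near the ``seam'' at $1/2$ and near $0\equiv 1$ removes configurations straddling those seams, and a short computation using only the two facts (each marginal uniform; torus-distance $>\eps$ a.s.) shows the joint probability exceeds $1/4$ by a term of order $\eps$. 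The main obstacle — and the step I would spend the most care on — is precisely this bookkeeping: correctly accounting, via inclusion–exclusion on the torus, for how much probability the distance constraint relocates into the two anchored boxes, and checking the sign works out to give a \emph{strict} violation of the product inequality for an explicit $(Q,R)$; the reduction to one dimension and the uniformity of marginals are routine.
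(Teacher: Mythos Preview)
Your proposal contains a genuine gap: none of your four attempts actually produces a violating pair $(Q,R)$, and the final one is demonstrably wrong. Your last suggestion is to take the $i$-th factors of $Q$ and $R$ both equal to $[\tfrac12,1)$ and argue that the joint probability exceeds $\tfrac14$ ``by a term of order $\eps$''. This fails already for the simplest instance of the hypothesis: with $d=1$, $N=2$, $p_1$ uniform on $[0,1)$ and $p_2 = p_1 + \tfrac12 \bmod 1$, we have $\dist(p_1,p_2)=\tfrac12$ almost surely (so the hypothesis holds with any $\eps<\tfrac12$), yet $\Prob(p_1\ge\tfrac12,\,p_2\ge\tfrac12)=0\le\tfrac14$. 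So the symmetric choice $[\tfrac12,1)\times[\tfrac12,1)$ cannot yield a violation from the two facts you allow yourself (uniform marginals, torus distance $>\eps$), and the ``bookkeeping'' you defer would not close the argument.

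The idea you are missing is the one you brushed past in your third attempt. You correctly observed that conditioning on $p_2^{(i)}\in[r,1)$ with $1-r$ small forbids $p_1^{(i)}$ from an arc around the seam that includes an interval $[0,c)$ with $c>0$. Instead of then taking $Q$ \emph{inside} the forbidden region (which gives joint probability $0$, the wrong direction), take $Q$ to be the \emph{complementary} anchored box $[c,1)$: then $\Prob(p_1\in Q\mid p_2\in R)=1$, while $\Prob(p_1\in Q)=1-c<1$, and you have your violation. Concretely, the paper takes $R=[0,1)^{d-1}\times[1-\tfrac{\eps}{2},1)$ and $Q=[0,1)^{d-1}\times[\tfrac{\eps}{2},1)$: if $p_2^{(i)}\ge 1-\tfrac{\eps}{2}$ and $p_1^{(i)}<\tfrac{\eps}{2}$, the wrap-around distance is below $\eps$, which is almost surely excluded; hence $\Prob(p_1\in Q\mid p_2\in R)=1>1-\tfrac{\eps}{2}=\Prob(p_1\in Q)$. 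The whole proof is two lines once you choose $Q$ large and $R$ small rather than both equal.
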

\begin{proof}
Without loss of generality let $i = d.$
Consider $Q := [0,1)^{d-1} \times [\tfrac{\epsilon}{2}, 1)$ and $R := [0,1)^{d-1} \times [1 - \tfrac{\epsilon}{2},1).$ We claim that
\begin{equation}\label{condProbClaim}
\Prob(p_1 \in Q | p_2 \in R) = 1,
\end{equation}
which already implies the statement of the proposition. Let  $p_2 \in R.$ Since almost surely $\dist(p_1^{(d)}, p_2^{(d)}) > \epsilon,$ we have $\Prob(p_1^{(d)} > \tfrac{\epsilon}{2}) = 1$ and so $\Prob(p_1 \in Q) = 1.$
\end{proof}
Proposition \ref{fixedDistance} shows that resigning from jittering we do not get a pairwise negatively dependent sampling scheme. As a side note, it also implies that lattice sampling, the earlier variant of LHS proposed by Patterson in \cite{Pat54}, does not provide a pairwise negatively dependent sampling scheme.
\item[(iii)]
 Finally consider the analogous construction for a fixed generating vector. To see that it is in general not pairwise negatively dependent take $N = 5, g = (\tfrac{1}{5}, \tfrac{1}{5})$ and $Q = [\tfrac{3}{5},1)^2, R = [\tfrac{4}{5},1)^2.$ Simple calculations reveal that in this case $$\Prob(p_1 \in Q, p_2 \in R) = \tfrac{1}{5} \tfrac{1}{2\binom{5}{2}} = \tfrac{1}{100}$$ and $$\Prob(p_1 \in Q)\Prob(p_2 \in R) = \tfrac{4}{625} < \tfrac{1}{100}.$$
\end{itemize}

\end{document}